\newtheorem{proposition}{Proposition}[section]
\newtheorem{theorem}[proposition]{Theorem}
\newtheorem{conjecture}[proposition]{Conjecture}
\theoremstyle{definition}
\newtheorem{remark}[proposition]{Remark}
\newtheorem{definition}[proposition]{Definition}
\newtheorem{example}[proposition]{Example}
\newtheorem{question}[proposition]{Question}
\title{Approximating delta invariants in the sense of plt complements}
\author{Chuyu Zhou}
\address{\'Ecole Polytechnique F\'ed\'erale de Lausanne (EPFL), MA C3 615, Station 8, 1015 Lausanne, Switzerland}
\email{chuyu.zhou@epfl.ch}
\thanks{2010 
	    \emph{Mathematics Subject Classification}: 14J45.
	    \newline
	    \indent 
		\emph{Keywords}: Fano varieties, K-stability, delta invariants.
	}
\date{} 
\newcommand{\ord}{{\rm {ord}}}
\newcommand{\tc}{{\rm {tc}}}
\newcommand{\vol}{{\rm {vol}}}
\newcommand{\lct}{{\rm {lct}}}
\newcommand{\Proj}{{\rm{Proj}}}
\newcommand{\Val}{{\rm {Val}}}
\newcommand{\dt}{{\rm {dt}}}
\newcommand{\Supp}{{\rm {Supp}}}
\newcommand{\bA}{\mathbb{A}}
\newcommand{\bC}{\mathbb{C}}
\newcommand{\bN}{\mathbb{N}}
\newcommand{\bP}{\mathbb{P}}
\newcommand{\bQ}{\mathbb{Q}}
\newcommand{\bR}{\mathbb{R}}
\newcommand{\bZ}{\mathbb{Z}}
\newcommand{\mD}{\mathcal{D}}
\newcommand{\mE}{\mathcal{E}}
\newcommand{\mF}{\mathcal{F}}
\newcommand{\mH}{\mathcal{H}}
\newcommand{\mL}{\mathcal{L}}
\newcommand{\mO}{\mathcal{O}}
\newcommand{\mX}{\mathcal{X}}
\newcommand{\mY}{\mathcal{Y}}
\newcommand{\mZ}{\mathcal{Z}}
\begin{document}

\begin{abstract}
In this note, we will show that delta invariant  of a log Fano pair can be approximated by lc places of plt complements if it is no greater than one.  Under the assumption that delta invariant (no greater than one) of a log Fano pair can be approximated by lc places of \textit{bounded} plt complements, we show the existence of divisorial valuation computing delta invariant of this log Fano pair.

\end{abstract}

\maketitle
\tableofcontents

\section{Introduction}

Since the establishment of valuative criterion in the works \cite{Fuj19, Li17}, there has been a great progress in the study of K-stability. In particular, the traditional definition of K-stability by test configurations and generalized Futaki invariants can be replaced by beta invariants, and people pay more attention to the divisors over the given Fano variety rather than test configurations. In the work \cite{BHJ17}, 
the relation between test configurations and valuations is well studied, which makes it more clear that valuation theory indeed plays an important role in K-stability. In \cite{Fuj19}, some concepts of good divisors have appeared, such as dreamy divisors and special divisors (see Section \ref{section 2} for the definition), which correspond to good test configurations, and to use beta invariants to test K-stability one only needs to test all special divisors. It is somehow closely related to special test configuration theory developed in \cite{LX14}, which says it is enough to test all special test configurations to check K-stability of a Fano variety. One advantage to look at good divisors is that they can be bounded in some sense, which provides unexpected viewpoints to K-stability. For example, we have known that delta invariants can be approximated by special divisors if they are no greater than one, e.g. \cite{BLZ19}, thus it would be natural to bound these special divisors to study delta invariants. In the work \cite{BLX19}, they show that a weakly special divisor can be achieved as a log canonical place of some bounded complement. Therefore, they can bound these weakly special divisors in the sense of bounded complements, since bounded complements lie in a bounded family. This observation leads to the final solution to the openness conjecture for K-semistable locus and the existence of QM-minimizers for delta invariants, e.g. \cite{BLX19,Xu20}. In this note, we will give a refined approximation result for delta invariants by plt complements. Here a plt complement means a complement admitting a plt crepant pullback.  See Section \ref{section 2} for the definition.

\begin{theorem}\label{mainthm:1}
Given a log Fano pair $(X,\Delta)$ with $\delta(X,\Delta)\leq 1$. Then there is a sequence of prime divisors $\{E_i\}_{i\in \bN}$ over $X$ such that
\begin{enumerate}
\item $\delta(X,\Delta)=\lim_{i\to \infty}\frac{A_{X,\Delta}(E_i)}{S_{X,\Delta}(E_i)}$, 
\item  For each $E_i$ there is a plt complement $(X,\Delta_i^+)$ such that  $E_i$ is the lc place of $(X,\Delta_i^+)$.
\end{enumerate}
\end{theorem}

Note here that we have not confirmed in the above theorem that the plt complement can be replaced by \textit{bounded} plt complement (see Section \ref{section 2} for the definition). This is the following conjecture.

\begin{conjecture}\label{bounded plt}
Let $(X,\Delta)$ be a log Fano pair with $\delta(X,\Delta)\leq 1$, Then there is a natural number $N$ and a sequence of prime divisors $\{E_i\}_{i\in \bN}$ over $X$ such that
\begin{enumerate}
\item $\delta(X,\Delta)=\lim_{i\to \infty}\frac{A_{X,\Delta}(E_i)}{S_{X,\Delta}(E_i)}$, 
\item  For each $E_i$ there is a plt $N$-complement $(X,\Delta_i^+)$ such that  $E_i$ is the lc place of $(X,\Delta_i^+)$.
\end{enumerate}
\end{conjecture}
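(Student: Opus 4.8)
The plan is to combine two known approximation results with a refinement step that improves "weakly special" to "plt type". First I would recall that since $\delta(X,\Delta)\le 1$, by the approximation result of \cite{BLZ19} (and related work) there exists a sequence of prime divisors $\{F_i\}$ over $X$ with $\delta(X,\Delta)=\lim_i A_{X,\Delta}(F_i)/S_{X,\Delta}(F_i)$ and such that each $F_i$ is a \emph{weakly special} divisor, i.e. computes (up to $\varepsilon_i\to 0$ error) the infimum and can be realized as an lc place of some complement $(X,\Delta_i')$. The point of \cite{BLX19} is that such $F_i$ is an lc place of a complement; the point here is to upgrade the complement $\Delta_i'$ so that $F_i$ becomes the \emph{unique} lc place (or one of finitely many non-intersecting lc places), which is exactly the plt-type condition.

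The key step is the following local-to-global perturbation. Fix $i$ and let $\pi\colon Y\to X$ be a birational model extracting $F_i$ (and the other lc places of $(X,\Delta_i')$). On $Y$ one writes $K_Y+\Delta_{i,Y}' = \pi^*(K_X+\Delta_i')$, and the lc places of $(X,\Delta_i')$ correspond to the components of $\Delta_{i,Y}'$ with coefficient $1$. To make $F_i$ plt, I would run an MMP / use the dlt modification together with a careful choice of a small perturbing divisor: replace $\Delta_i'$ by $\Delta_i^+ := (1-t)\Delta_i' + t\Theta$ for suitable $t>0$ small and an effective $\bQ$-divisor $\Theta\sim_\bQ -K_X-\Delta$ chosen so that its pullback kills the discrepancy of all the unwanted lc places but not that of $F_i$. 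Concretely, one chooses $\Theta$ to contain $\pi(\text{Exc-divisors}\setminus F_i)$ with appropriate multiplicities while being generic along $C_X(F_i)$; then for $t$ in a small interval $(X,\Delta_i^+)$ is a complement whose only lc place is $F_i$ (or, if several of the original places are needed and they are mutually non-intersecting, one keeps those and perturbs only the rest). This uses that extracting a single divisorial valuation as the unique lc place of a plt pair is always possible after such a perturbation — a statement one proves by Bertini-type genericity plus the definition of plt.

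The main obstacle I expect is controlling the ratio $A_{X,\Delta}(F_i)/S_{X,\Delta}(F_i)$ under this perturbation: changing $\Delta_i'$ to $\Delta_i^+$ does not move $F_i$ as a valuation, so $A_{X,\Delta}(F_i)$ and $S_{X,\Delta}(F_i)$ are literally unchanged — the perturbation is purely about the auxiliary boundary witnessing the complement, not about $\Delta$ itself. So in fact the ratio is not an issue; the real subtlety is ensuring the perturbed pair $(X,\Delta_i^+)$ is still log canonical globally (not just near $C_X(F_i)$) and that $-K_X-\Delta_i^+$ is still $\bQ$-linearly trivial, which forces $\Theta$ to be $\bQ$-linearly equivalent to $-K_X-\Delta$ and hence effective only because $(X,\Delta)$ is log Fano. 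The genericity needed for $\Theta$ to avoid creating new lc centers, while still separating $F_i$ from the other extracted divisors, is where the argument requires care; this is handled by choosing $\Theta$ as a general member of a suitable sub-linear-system and invoking the connectedness principle to rule out the remaining lc centers. Passing to the limit then gives conclusion (1) verbatim from the starting sequence, and conclusion (2) from the perturbation construction. $\qed$
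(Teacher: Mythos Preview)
There is a fundamental mismatch: the statement in question is Conjecture~\ref{bounded plt}, which the paper explicitly leaves \emph{open} --- there is no proof of it in the paper to compare against. Your proposal is even labeled as a plan for Theorem~\ref{mainthm:1}, the unbounded version. The entire content of Conjecture~\ref{bounded plt} beyond Theorem~\ref{mainthm:1} is the existence of a \emph{single} natural number $N$, depending only on $(X,\Delta)$, such that every $\Delta_i^+$ can be taken to be an $N$-complement. Your perturbation $\Delta_i^+=(1-t)\Delta_i'+t\Theta$ gives no control whatsoever on the denominators of $\Delta_i^+$: the parameters $t$ and $\Theta$ vary with $i$, and nothing in the argument forces $N(K_X+\Delta_i^+)\sim 0$ for a fixed $N$. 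So as a proof of the conjecture, the proposal does not engage with the actual statement at all.

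Even read as a sketch of Theorem~\ref{mainthm:1}, the perturbation step has a genuine gap. If $F_i$ is an lc place of $(X,\Delta_i')$ and you pass to $(1-t)\Delta_i'+t\Theta$, then $F_i$ remains an lc place only if $\ord_{F_i}(\Theta)\ge \ord_{F_i}(\Delta_i')$; a general $\Theta$ will not satisfy this, and forcing it while simultaneously lowering the multiplicities along the \emph{other} lc centers is typically impossible when $C_X(F_i)$ lies in the closure of those centers --- which, by connectedness of the non-klt locus, is the generic situation. The paper's proof of Theorem~\ref{mainthm:1} avoids this entirely: it does not start from a pre-existing complement but builds one from scratch out of an $m$-basis-type divisor $D_m$ computing $\delta_m$, takes a log resolution, and isolates a single exceptional lc place $G_1$ by subtracting a carefully weighted $f$-ample combination of exceptional divisors (choosing the weights so that $\{b_k/l_k\}$ has a unique minimum), then runs an MMP$/X$ to contract all other exceptional divisors. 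The isolation is thus engineered by an explicit numerical choice on the resolution, not by a Bertini-type genericity argument on $X$.
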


Once this conjecture is proved, one only needs to consider these bounded plt complements. Obviously they lie in an algebraic family. After we make a stractification of the base of this family, we may assume that for each stratum the restricted family admits a fiberwise log resolution (up to an \'etale morphism). Similar to \cite{BLX19}, but in this case we only look at those strata whose dual complexes of lc places are zero dimensional. We then have the following result.

\begin{theorem}\label{mainthm:2}
Given a log Fano pair $(X,\Delta)$ with $\delta(X,\Delta)\leq 1$. Assume Conjecture \ref{bounded plt} is true, then there exists a prime divisor $E$ over $X$ such that $\delta(X,\Delta)=\frac{A_{X,\Delta}(E)}{S_{X,\Delta}(E)}$.
\end{theorem}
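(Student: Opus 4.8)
The plan is to combine Theorem \ref{mainthm:1} with Conjecture \ref{bounded plt} to reduce the problem to a boundedness/finiteness statement, then run an argument in the spirit of \cite{BLX19} adapted to the plt (i.e. one-dimensional dual complex) situation. First I would invoke Conjecture \ref{bounded plt} to obtain a uniform $N$ and a sequence $\{E_i\}$ computing $\delta(X,\Delta)$ in the limit, each $E_i$ an lc place of an $N$-complement of plt type $(X,\Delta_i^+)$. Since the $N$-complements $\Delta_i^+$ form a bounded family (they are cut out by sections of a fixed multiple of $-K_X-\Delta$, hence parametrized by a finite-type scheme), I would stratify the base $T$ of the universal family into finitely many locally closed pieces so that over each stratum there is a fiberwise log resolution after a finite \'etale base change; this is standard Noetherian/generic-smoothness yoga. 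On each stratum the lc places of the complement are among the finitely many exceptional divisors of the fixed resolution, so the dual complex of lc places is constant along the stratum.

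Next I would use the plt hypothesis: a complement of plt type has either one lc place or several pairwise non-intersecting lc places, so its dual complex of lc places is zero-dimensional or a disjoint union of simplices of dimension $\le$ something controlled — in the relevant analysis the key point is that, after the constructions in \cite{BLX19}, one only needs to look at strata whose dual complex of lc places is at most one-dimensional. The crucial input is that the function $E\mapsto \frac{A_{X,\Delta}(E)}{S_{X,\Delta}(E)}$, restricted to the (finitely many, up to the combinatorics of a one-dimensional complex) families of divisorial valuations arising this way, varies in a controlled manner: $A_{X,\Delta}$ is affine-linear in the barycentric coordinates on each simplex of the dual complex, and $S_{X,\Delta}$ is concave (by the convexity/monotonicity properties of $S$ along linear segments of valuations, cf. the Okounkov-body description), so the quotient $A/S$ attains its infimum over each (compact) one-dimensional simplex at a point which is again a divisorial valuation — either a vertex or an interior rational point where the relevant linear/concave functions are pinned down. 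This is exactly where the restriction to one-dimensional dual complexes is used: on a segment one can solve the optimization explicitly and stay within divisorial valuations, whereas on higher-dimensional faces the minimizer of $A/S$ need only be quasi-monomial.

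Putting these together: the infimum defining $\delta(X,\Delta)$ is approached by the $E_i$, each lying in one of finitely many families indexed by the strata and by the (bounded) combinatorics of a one-dimensional dual complex; on each such family the above concavity/linearity argument produces a divisorial valuation $E$ realizing the infimum over that family; taking the best among the finitely many families gives a single prime divisor $E$ over $X$ with $\delta(X,\Delta)=\frac{A_{X,\Delta}(E)}{S_{X,\Delta}(E)}$. I expect the main obstacle to be the careful bookkeeping in the stratification step — ensuring that the lc places (not just the log canonical thresholds) vary algebraically, that the plt/non-intersecting condition is preserved on each stratum, and that one can genuinely restrict attention to the one-dimensional strata as claimed, i.e. that no minimizing sequence is forced to escape into higher-dimensional faces of the dual complex. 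The concavity of $S$ along linear families of valuations and the lower semicontinuity needed to pass the limit into the family are the other technical points, but these are available from the Okounkov-body machinery and the arguments already used in \cite{BLX19,Xu20}.
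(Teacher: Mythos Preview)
Your overall architecture matches the paper's: parametrize $N$-complements by a bounded family, stratify into finitely many locally closed pieces admitting a fiberwise log resolution (after \'etale base change), observe that the dual complex of lc places and the quantity $\inf_v A_{X,\Delta}(v)/S_{X,\Delta}(v)$ over it are constant along each stratum (this is \cite[Proposition~4.2]{BLX19}), and conclude by finiteness of the strata once you know that this infimum on each relevant dual complex is attained at a divisorial valuation.

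Where you go astray is in that last step. You try to argue that on a one-dimensional edge of the dual complex the quotient $A/S$ attains its minimum at a divisorial point, invoking linearity of $A$ and concavity of $S$. This is both unjustified and unnecessary. Unjustified, because a linear-over-concave function on a segment can perfectly well be minimized at an irrational barycentric coordinate, producing an honestly quasi-monomial non-divisorial valuation; nothing in your sketch ``pins it down'' to a rational point. Unnecessary, because the plt hypothesis says precisely that the lc places of $(X,\Delta^+)$ never meet on any birational model, so the dual complex has \emph{no edges at all}: it is a finite disjoint union of vertices (or, in the paper's convention where one records rays of valuations rather than their projectivizations, a disjoint union of one-dimensional rays --- this is what the paper means by ``one-dimensional''). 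Every valuation in such a complex is already a positive multiple of some $\ord_E$, hence divisorial, and since $A/S$ is scale-invariant the infimum over each stratum's dual complex is simply the minimum of finitely many numbers $A_{X,\Delta}(E)/S_{X,\Delta}(E)$. Your worry about minimizing sequences escaping into higher-dimensional faces is therefore moot: by definition of a plt-type complement there are no such faces, and this triviality of the dual complex is the entire content of the plt assumption in the argument.
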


In \cite{BLZ19}, we have shown that delta invariant (no greater than one) of a log Fano pair $(X,\Delta)$ can be approximated by special divisors over $X$. 

\begin{theorem}{\rm {(\cite[Theorem 4.3]{BLZ19})}}\label{special approximation}
Let $(X,\Delta)$ be a log Fano pair with $\delta(X,\Delta)\leq 1$, then there is a sequence of special divisors $\{E_i\}_{i\in \bN}$ such that
$$\delta(X,\Delta)=\lim_{i\to \infty}\frac{A_{X,\Delta}(E_i)}{S_{X,\Delta}(E_i)}. $$
\end{theorem}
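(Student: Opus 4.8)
The plan is to start from the variational characterization $\delta(X,\Delta)=\inf_E \tfrac{A_{X,\Delta}(E)}{S_{X,\Delta}(E)}$ (Blum--Jonsson), pick a minimizing sequence of prime divisors, and then promote each term to a special divisor in two stages: first to a \emph{weakly special} divisor (an lc place of an honest $\mathbb{Q}$-complement), and then to a genuinely special divisor via the special test configuration MMP, all the while keeping $A/S$ under control. First I would fix, for each large $m$, a prime divisor $E_m$ over $X$ with $\tfrac{A_{X,\Delta}(E_m)}{S_{X,\Delta}(E_m)}\to\delta$, and choose an $m$-basis type $\mathbb{Q}$-divisor $D_m\sim_{\mathbb{Q}}-(K_X+\Delta)$ \emph{compatible} with $E_m$, so that $\ord_{E_m}(D_m)=S_m(E_m)$, where $S_m$ is the $m$-th basis type invariant (Fujita--Odaka, see \cite{Fuj19}). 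Then $\lct(X,\Delta;D_m)\le \tfrac{A_{X,\Delta}(E_m)}{S_m(E_m)}=:c_m$, and any lc place $E_m'$ of the lc pair $(X,\Delta+\lambda_m D_m)$ with $\lambda_m:=\lct(X,\Delta;D_m)$ satisfies $\tfrac{A_{X,\Delta}(E_m')}{S_m(E_m')}\le\lambda_m\le c_m$. Since $S_m\to S_{X,\Delta}$ uniformly and $\tfrac{A_{X,\Delta}(E_m')}{S_{X,\Delta}(E_m')}\ge\delta$ always, after renaming $E_m'\rightsquigarrow E_m$ I obtain lc places $E_m$ of lc pairs $(X,\Delta+\lambda_m D_m)$ with $\tfrac{A_{X,\Delta}(E_m)}{S_{X,\Delta}(E_m)}\to\delta$.

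Second, I would turn each $E_m$ into a weakly special divisor by completing $\Delta+\lambda_m D_m$ to a $\mathbb{Q}$-complement. For $m$ large with $\lambda_m\le 1$ one has $K_X+\Delta+\lambda_m D_m\sim_{\mathbb{Q}}(1-\lambda_m)(K_X+\Delta)$, so $-(K_X+\Delta+\lambda_m D_m)\sim_{\mathbb{Q}}(1-\lambda_m)\big(-(K_X+\Delta)\big)$ is ample (or trivial when $\lambda_m=1$). I would then pick a general $G_m\in|-(K_X+\Delta)|_{\mathbb{Q}}$ avoiding the center of $E_m$ and set $\Delta_m^+:=\Delta+\lambda_m D_m+(1-\lambda_m)G_m$, so that $K_X+\Delta_m^+\sim_{\mathbb{Q}}0$, the pair $(X,\Delta_m^+)$ is lc by generality of $G_m$, and $E_m$ remains an lc place. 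Thus each $E_m$ is weakly special, with the same limiting value of $A/S$. Here is where $\delta\le 1$ is essential: when $\delta<1$ we have $\lambda_m\le c_m<1$ for all large $m$; the boundary case $\delta=1$ I would treat separately by a perturbation of $\Delta$ (or by passing to a subsequence along which $\lambda_m\le1$), changing $A/S$ only by $o(1)$.

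Third --- the technical heart --- I would convert each weakly special $E_m$ into a genuine special divisor $F_m$ with $\tfrac{A_{X,\Delta}(F_m)}{S_{X,\Delta}(F_m)}\le\tfrac{A_{X,\Delta}(E_m)}{S_{X,\Delta}(E_m)}$. Being an lc place of the complement $(X,\Delta_m^+)$, the divisor $E_m$ determines, via a plt/dlt extraction and the Rees construction as in \cite{LX14,BLX19}, a weakly special degeneration of $(X,\Delta)$; I would run the appropriate MMP to modify it into a special test configuration whose central fibre is a klt log Fano pair, with central divisor $F_m$ a special divisor. The decisive input is the monotonicity of the $A/S$ functional (equivalently, the non-increase of the destabilizing invariant) along the extractions, flips and contractions of this MMP. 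Combined with $\tfrac{A_{X,\Delta}(F_m)}{S_{X,\Delta}(F_m)}\ge\delta$ from the infimum defining $\delta$, this forces $\tfrac{A_{X,\Delta}(F_m)}{S_{X,\Delta}(F_m)}\to\delta$, yielding the desired sequence of special divisors.

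I expect the main obstacle to be exactly this last step: guaranteeing that the MMP turning a weakly special degeneration into a special one does not increase $A/S$, which requires tracking the log discrepancy and the $S$-invariant (equivalently the relevant volumes) through each birational step and invoking the special test configuration theory of \cite{LX14} together with the lc-places-of-complements viewpoint of \cite{BLX19}. By contrast, the basis-type approximation of Step~1 and the complement construction of Step~2 are comparatively routine, modulo the generality and lc-preservation claims, which follow from standard Bertini-type arguments.
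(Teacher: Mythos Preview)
The paper does not give its own proof of this statement: Theorem~\ref{special approximation} is quoted verbatim from \cite[Theorem~4.3]{BLZ19} and used as background, so there is no in-paper argument to compare against. Your three-step outline (basis-type divisors $\rightarrow$ lc places of a $\mathbb{Q}$-complement $\rightarrow$ MMP upgrade to a special divisor) is essentially the strategy of \cite{BLZ19}; in particular you have correctly located the only nontrivial step, the passage from weakly special to special with control on $A/S$.

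One caution about Step~3: in \cite{BLZ19} the monotonicity is not established by tracking $A/S$ through each extraction/flip/contraction of an MMP on the test configuration. Rather, one shows directly that if $E$ is an lc place of a pair $(X,\Delta+\lambda D)$ with $\lambda\le 1$ and $D\sim_{\mathbb Q}-(K_X+\Delta)$, then after a plt-type extraction and a $(-K)$-MMP (in the spirit of \cite{LX14}) one obtains a special degeneration whose associated divisor $F$ satisfies $\frac{A_{X,\Delta}(F)}{S_{X,\Delta}(F)}\le \frac{A_{X,\Delta}(E)}{S_{X,\Delta}(E)}$; the comparison goes through the $\beta$- (or twisted Futaki) invariant rather than a step-by-step volume inequality. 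Your sketch is compatible with this, but the phrase ``monotonicity of $A/S$ along the MMP'' should be read as this endpoint comparison, not as a literal monotonicity statement for each step.

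It is also worth noting, for context, that the present paper's Theorem~\ref{mainthm:1} gives an approximation by lc places of \emph{plt-type} complements, and one might hope to deduce Theorem~\ref{special approximation} by showing that every such lc place is special. The paper explicitly warns against this: the examples in the final section exhibit an lc place of a plt-type complement which is \emph{not} special (and conversely a special divisor which is not such an lc place). So any route to Theorem~\ref{special approximation} must genuinely use the \cite{LX14}-type MMP step you describe, not the plt-complement approximation of Theorem~\ref{mainthm:1}.
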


However, the approximation by lc places of plt complements in Theorem \ref{mainthm:1} is not finer than the above approximation by special divisors. In fact, there exist lc places of some plt complements which are not  special divisors. 
We will give  a counter-example in the last section. 
The converse direction has not been addressed here, that is, we do not answer the question that whether a special divisor over $X$ can always be achieved as an lc place of some plt complements.

It is now well known to experts that weakly special divisors can be characterized in the sense of complements, e.g. \cite[Appendix]{BLX19}. However, it is not known how to give a similar characterization for special divisors, though this characterization is known in a local viewpoint, that is, there is a correspondence between special divisors over $(X,\Delta)$ and $\bC^*$-equivariant Koll\'ar components over the cone vertex of $(Z,\Gamma)$, where $(Z,\Gamma)$ is the affine cone over $(X,\Delta)$ with respect to a multiple of $-(K_X+\Delta)$, see \cite{LWX21}.  We put here the following question that has not been addressed in this note.

\begin{question}\label{special divisor}
How to give a characterization of special divisors in the sense of complements (just as the characterization for weakly special divisors)?
\end{question}

\begin{remark}
Recently, Ziquan Zhuang has given a positive answer to the converse direction. That is, a special divisor over $X$ can always be achieved as an lc place of some plt complements, thus giving the desired characterization of special divisors. See the survey \cite[Theorem 4.12]{Xu20b}.
\end{remark}

The paper is organized as follows. In Section \ref{section 2} we provide some necessary preliminaries. In Section \ref{section 3}, we prove Theorem \ref{mainthm:1}. In Section \ref{section 4}, we prove Theorem \ref{mainthm:2}. In the last section we provide a counter-example to explain that  lc places of plt complements may not correspond to special divisors.

\noindent
\subsection*{Acknowledgement}
The author thanks Professor Chenyang Xu and Chen Jiang for beneficial comments. Special thanks go to Yuchen Liu for  pointing out a serious mistake and providing the counter-example in the last section.

\section{Preliminaries}\label{section 2}

In this section, we provide some necessary preliminaries. Throughout the note, we always work over the complex number field $\bC$. We say that $(X,\Delta)$ is a log pair if $X$ is a projective normal variety and $\Delta$ is an effective $\bQ$-divisor on $X$ with $K_X+\Delta$ being $\bQ$-Cartier. We say that a log pair $(X,\Delta)$ is log Fano if it admits klt singularities and $-K_X-\Delta$ is ample. We say that $E$ is a prime divisor over $X$ if there is a proper normal birational model $\mu:Y\to X$ such that $E$ is a prime divisor on $Y$. For singularities in birational geometry such as lc, klt, dlt, plt, etc., we refer to \cite{KM98,Kollar13}.

\subsection{Valuations and log discrepancies}

Let $X$ be a variety over $\bC$, a valuation $v$ over $X$ is a real function defined on the function field $K(X)^*:=K(X)\setminus 0$ satisfying the following conditions:
\begin{enumerate}
\item $v(fg)=v(f)+v(g)$,
\item $v(f+g)\geq \min\{v(f), v(g)\},$
\item $v(\bC^*)=0$.
\end{enumerate}
We also set $v(0)=+\infty$. The valuation ring $\mO_v$ associated to $v$ is given by
$$\mO_v:=\{f\in K(X)| v(f)\geq 0\}. $$
We say the valuation $v$ is centered at a scheme-theoretic point $\eta\in X$ if there is a local inclusion $\mO_{X,\eta}\hookrightarrow \mO_v$, denoted by $c_X(v)=\eta$. We write $\Val_X$ for the space of valuations with centers on $X$. Now  we introduce an important class of valuations which are called quasi-monomial valuations. Let $\mu: Y\to X$ be a proper birational morphism and $\eta\in Y$ a scheme-theoretic point on $Y$. Suppose $Y$ is regular at $\eta$ and choose a local system of parameters $y_1,...,y_r\in \mO_{Y,\eta}$ at $\eta$. If $\beta:=(\beta_1,...,\beta_r)\in \bZ_{\geq 0}^r$, we denote by $y^\beta:=\Pi_{j=1}^ry_j^{\beta_j}$. For $f\in \mO_{Y,\eta}$, one can write $f=\sum_{\beta\in \bZ^r_{\geq 0}}c_\beta y^\beta$, with $c_\beta\in \widehat{\mO_{Y,\eta}}$ being zero or a unit. For $\alpha:=(\alpha_1,...,\alpha_r)\in \bR^r_{\geq 0}\setminus 0$, we set
$$v_\alpha(f):=\min_\beta\{(\alpha, \beta):=\sum_{j=1}^r\alpha_j\beta_j| c_\beta\ne 0\}. $$
A quasi-monomial valuation is a valuation of the above form. Now we turn to the definition of  log discrepancies. Let $(X,\Delta)$ be a log pair and $f: Y\to (X,\Delta)$ is a log resolution. For a prime divisor $F\subset Y$, we define 
$$A_{X,\Delta}(F):=\ord_F(K_Y-f^*(K_X+\Delta))+1. $$
Suppose $E_i, i=1,...,r,$ are $r$ simple normal crossing divisors on $Y$ which intersect at a scheme theoretic point $\eta$. We choose a local system of parameters at $\eta$ corresponding to local generators of these $E_i$'s. For the quasi-monomial valuation $v_\alpha$ corresponding to $\alpha=(\alpha_1,...,\alpha_r)$, we define
$$A_{X,\Delta}(v_\alpha):=\sum_{j=1}^r\alpha_jA_{X,\Delta}(E_j).$$

\subsection{Dual complex of lc places}

Let $(Y,E=\sum_j E_j)$ be a simple normal crossing pair, one can form a dual complex $\mD(E)$ as follows. For each component $E_j$ we assign a vertex $v_j$, and for any component $Z$ of the intersection of $r$ components $E_{j_k}, k=1,...,r,$ we assign the following $(r-1)$-cell
$$W_Z:=\{(a_1,...,a_r)\in \bR^r_{\geq 0}| \sum_{k=1}^{r}a_k=1\} $$
glued on $v_{j_1},..., v_{j_r}$. It is not hard to see that the points in $W_Z$ correspond to quasi-monomial valuations over $Y$.

Let $(X,\Delta)$ be a log canonical pair and $\mu: Y\to (X,\Delta)$ be a log resolution. Suppose $E_i, i=1,...,r,$ are all simple normal crossing lc places on $Y$. Write $E:=\sum_{j=1}^r E_j$ the reduced sum of these lc places, then $\mD(E)$ is called the dual complex of lc places of the pair $(X,\Delta)$ with respect to the log resolution $\mu$. If $(X,\Delta)$ admits a plt crepant pullback (i.e. there exists a proper birational morphism from a log pair $(Y,\Delta_Y)\stackrel{f}{\to} (X,\Delta)$ such that $K_Y+\Delta_Y=f^*(K_X+\Delta)$ and $(Y,\Delta_Y)$ is plt), then for any log resolution, it is clear that the dual complex of lc places is zero dimensional.

\subsection{Good test configurations and good divisors}

\begin{definition}
Let $(X,\Delta)$ be a log Fano pair. A test configuration $(\mX,\Delta_{\tc};\mL)\to \bA^1$ consists of the following data:
\begin{enumerate}
\item A flat morphism $(\mX,\Delta_{\tc})\to \bA^1$ from a normal scheme.
\item A $\bC^*$-equivariant isomorphism between $(\mX,\Delta)\times_{\bA^1}\bC^*$ and $(X\times \bA^1, \Delta\times \bA^1)\times_{\bA^1}\bC^*$ induced by the natural $\bC^*$-action on $\bA^1$.
\item A relatively ample $\bQ$-line bundle $\mL$ on $\mX$ which is $\bC^*$-equivariant and $\mL_t\sim_\bQ -K_X-\Delta$ for $t\neq 0$.
\end{enumerate}
We say that the test configuration is dreamy if the central fiber $\mX_0$ is integral. We say that the test configuration is weakly special (resp. special) if it is dreamy and $(\mX,\Delta_{\tc}+\mX_0)$ is log canonical (resp. plt), and $\mL\sim_\bQ -K_{\mX}-\Delta_{\tc}$.  Note that a special test configuration is equivalent to a log Fano degeneration of $(X,\Delta)$ since the central fiber $(\mX_0,\Delta_{\tc,0})$ is also a log Fano pair.
\end{definition}

For a dreamy test configuration $(\mX,\Delta_{\tc};\mL)$, the central fiber induces a divisorial valuation $\ord_{\mX_0}$ over $\mX$. If one restricts the divisorial valuation via the embedding $K(X)^*\hookrightarrow K(\mX^*)$, then we get a divisorial valuation over $X$ if the test configuration is non-trivial, e.g. \cite[Lemma 4.1]{BHJ17}. We denote by ${\ord_{\mX_0}}|_X$ the restriction of $\ord_{\mX_0}$, then there exists a natural number $c$ and a prime divisor $E$ over $X$ such that ${\ord_{\mX_0}}|_{X}=c\cdot \ord_E$. Note that we can assume $c=1$ up to a base change of $(\mX,\Delta_{\tc}; \mL)\to \bA^1$.

\begin{definition}
If $(\mX,\Delta_{\tc};\mL)\to \bA^1$ is a dreamy test configuration, we say that the corresponding $E$ (resp. $\ord_E$) over $X$ is a dreamy divisor (resp. dreamy valuation). If the test configuration is weakly special, we say that $E$ (resp. $\ord_E$) over $X$ is a weakly special divisor (resp. weakly special valuation). If the test configuration is special, we say that $E$ (resp. $\ord_E$) over $X$ is a special divisor (resp. special valuation). 
\end{definition}

\subsection{Complements}

The concept of complement is originally introduced by V.V.Shokurov in the work \cite{Sho92}, and it is systematically developed by Birkar in \cite{Birkar19}. It is proved that this concept indeed plays an important role in the study of singularities of anti-canonical linear systems of log Fano pairs and the final solution to BAB conjecture, e.g. \cite{Birkar19, Birkar21}.

\begin{definition}
Let $(X,\Delta)$ be a log Fano pair. An effective $\bQ$-divisor $\Delta^+$ (or a pair $(X,\Delta^+)$) is called a complement of $(X,\Delta)$ if the pair $(X,\Delta^+)$ is log canonical with $K_X+\Delta^+\sim_\bQ 0$ and $\Delta^+\geq \Delta$. We say that $\Delta^+$ (or a pair $(X,\Delta^+)$) is a $N$-complement for some positive natural number $N$ if moreover $N(K_X+\Delta^+)\sim 0$ and $N\Delta^+\geq N\lfloor\Delta\rfloor +\lfloor(N+1)\{\Delta\} \rfloor$, where $\{\Delta\}=\Delta-\lfloor\Delta\rfloor$.
\end{definition}

A key observation in \cite{BLX19} is that weakly special divisors can be bounded in the sense of bounded complements.

\begin{theorem}{\rm (\cite[Appendix]{BLX19})}
Let $(X,\Delta)$ be a log Fano pair and $E$ a weakly special divisor over $X$. Then there exists a $N$-complement $(X,\Delta^+)$ such that $E$ can be achieved as an lc place of $(X,\Delta^+)$, where $N$ only depends on the pair $(X,\Delta)$ but not depends on $E$.
\end{theorem}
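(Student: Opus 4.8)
The plan is to deduce the statement from Birkar's theorem on boundedness of complements, applied to the degeneration attached to $E$. Since $E$ is a weakly special divisor I would begin with a weakly special test configuration $(\mX,\Delta_{\tc};\mL)\to\bA^1$ of $(X,\Delta)$ inducing $E$; after passing to a base change (harmless for the final bound, since it changes neither $\dim X$ nor the set of coefficients of the boundary) one may assume $r(\ord_{\mX_0})=\ord_E$. By definition $\mX_0$ is prime, $(\mX,\Delta_{\tc}+\mX_0)$ is log canonical, and $\mL\sim_\bQ-K_\mX-\Delta_{\tc}$. Since $\mX_0$ is a fibre of $\mX\to\bA^1$ the line bundle $\mO_\mX(\mX_0)$ is trivial near $0$, so adjunction gives $-(K_{\mX_0}+\Delta_{\tc,0})\sim_\bQ\mL|_{\mX_0}$, which is ample; hence $(\mX_0,\Delta_{\tc,0})$ is a projective log canonical weak log Fano pair of dimension $\dim X$ whose boundary coefficients lie in a fixed finite set determined by $(X,\Delta)$.

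Next I would produce a bounded complement and lift it to the total space. By Birkar's theorem there is $N=N(X,\Delta)$ and an $N$-complement $\Gamma_0^+$ of $(\mX_0,\Delta_{\tc,0})$, and since $(\mX_0,\Delta_{\tc,0})$ together with its polarization is $\bC^*$-equivariant I would arrange $\Gamma_0^+$ to be $\bC^*$-invariant. Using again that $\mX_0$ is a fibre, the section defining $\Gamma_0^+$ extends from $\mX_0$ to a $\bC^*$-invariant section of $-N(K_\mX+\Delta_{\tc}+\mX_0)$ over a neighbourhood of $0\in\bA^1$; surjectivity of the restriction map comes from Kawamata--Viehweg vanishing after a small perturbation into the klt range, using that $-N(K_\mX+\Delta_{\tc}+\mX_0)-\mX_0\sim_\bQ N\mL$ is ample there. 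This produces an $N$-complement $(\mX,\Delta_{\tc}^++\mX_0)$ of $(\mX,\Delta_{\tc}+\mX_0)$ over a neighbourhood of $0$, log canonical near $\mX_0$ by inversion of adjunction, with $\Delta_{\tc}^+$ $\bC^*$-invariant and not containing $\mX_0$. Restricting $\Delta_{\tc}^+$ to a general fibre $\mX_t\cong X$ (where $\mX_0|_{\mX_t}=0$), and taking $N$ divisible by the denominators of the coefficients of $\Delta$, I obtain an $N$-complement $(X,\Delta^+)$ of $(X,\Delta)$.

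It remains to check that $E$ is an lc place of $(X,\Delta^+)$. Since $\Delta_{\tc}^+$ is $\bC^*$-invariant and avoids $\mX_0$, it is exactly the closure of $\Delta^+$ under the identification $\mX\setminus\mX_0\cong X\times\bC^*$; thus, near $0$, $(\mX,\Delta_{\tc}^++\mX_0)$ is a crepant degeneration of the log canonical Calabi--Yau pair $(X,\Delta^+)$ in which $\mX_0$ is a coefficient-one boundary component, hence an lc place. Restricting the valuation $\ord_{\mX_0}$, whose restriction to $K(X)^*$ is $\ord_E$ by the normalization $c=1$, transfers this to $A_{X,\Delta^+}(E)=0$ via the standard dictionary between test configurations and valuations (cf. \cite{BHJ17}); therefore $E$ is realized as an lc place of the $N$-complement $(X,\Delta^+)$.

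The main obstacle is the equivariance: Birkar's construction of $N$-complements (via the minimal model program and tie-breaking) is not manifestly $\bC^*$-equivariant, so one must run the relevant contractions and fibrations equivariantly with respect to the polarized $\bC^*$-action in order to obtain a $\bC^*$-invariant complement $\Gamma_0^+$ of the central fibre; once this is in hand, everything else — the perturbation needed for vanishing, the adjunction bookkeeping along $\mX_0$, and the choice of denominators for $\Delta$ — is routine.
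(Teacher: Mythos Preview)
The paper does not give its own proof of this theorem; it is quoted verbatim from \cite[Appendix]{BLX19} as background in Section~\ref{section 2}. So there is nothing in the paper to compare against directly. Your sketch is, however, essentially the argument of \cite[Appendix]{BLX19}: pass to a weakly special test configuration, apply Birkar's bounded complements on the lc Fano central fibre, lift to the total space by vanishing, and read off an $N$-complement on a general fibre with $E$ as an lc place.

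Two remarks on the execution. First, the $\bC^*$-equivariance you flag is not the crux. Once $(\mX,\Delta_{\tc}^{+}+\mX_0)$ is lc with $\mX_0$ an lc place and $K_\mX+\Delta_{\tc}^{+}+\mX_0\sim_{\bQ,\bA^1}0$, push the boundary through the birational identification $\mX\dashrightarrow X\times\bA^1$ to a horizontal divisor $D$; computing on a model $Y\times\bA^1$ (with $Y\to X$ extracting $E$) where $E_{\bA^1}$ and $Y_0$ are snc shows
\[
A_{X\times\bA^1,\,D+X_0}(\ord_{\mX_0})=A_{X,\Delta^+_{\eta}}(E),
\]
with $\Delta^+_{\eta}$ the generic-fibre restriction of $D$, because the $Y_0$-contribution to the quasi-monomial log discrepancy vanishes. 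Hence $A_{X,\Delta^+_t}(E)=0$ for general $t$, with no invariance hypothesis on $\Delta_{\tc}^{+}$. Alternatively, if you prefer to keep equivariance, the MMP steps in Birkar's construction can be run $\bC^*$-equivariantly in a routine way, so this is not a genuine obstacle.

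Second, be careful with the inequality $\Delta_{\tc}^{+}\ge\Delta_{\tc}$ when lifting: write $\Gamma_0^{+}=\Delta_{\tc,0}+\Gamma_0'$ and lift $\Gamma_0'$ to a section of $-N(K_\mX+\Delta_{\tc}+\mX_0)$, so that $\Delta_{\tc}^{+}:=\Delta_{\tc}+\Gamma'$ automatically dominates $\Delta_{\tc}$; this is implicit in your sketch but worth making explicit, since otherwise the restriction to $\mX_t$ need not dominate $\Delta$.
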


One can choose $N$ sufficiently divisible such that $-N(K_X+\Delta)$ is Cartier and the complement $\Delta^+$ lies in the linear system $\frac{1}{N}|-N(K_X+\Delta)|$. Thus it is clear that $(X,\Delta^+)$ lies in a bounded family and any weakly special divisor over $X$ can be achieved as an lc place of some fiber of the family.

In this note, we concern the following refined concepts of complements, i.e. plt complements.

\begin{definition}
Let $(X,\Delta)$ be a log Fano pair. The complement $(X,\Delta^+)$ is called a plt complement if there is a proper birational morphism from a log pair $(Y,\Delta_Y)$, denoted by $f: (Y, \Delta_Y)\to (X,\Delta)$, such that $K_Y+\Delta_Y= f^*(K_X+\Delta^+)$ and $(Y,\Delta_Y)$ admits plt singularities.
A plt complement $(X,\Delta^+)$ is called a plt $N$-complement for some positive natural number $N$ if $N(K_X+\Delta^+)\sim 0$.
\end{definition}

\subsection{Delta invariants}

Let $(X,\Delta)$ be a log Fano pair. For a divisible natural number $m$, a $m$-basis type divisor is of the following form 
$$D_m:=\frac{\sum_i {\rm div}\{s_i=0\}}{m\dim H^0(X,-m(K_X+\Delta))}\sim_\bQ -K_X ,$$
where $\{s_i\}_i$ is a complete basis of the vector space $H^0(X,-m(K_X+\Delta))$.
Then we can define the following $m$-th delta invariant of $(X,\Delta)$ due to \cite{FO18},
$$\delta_m(X,\Delta):=\inf_{D_m} \lct(X,\Delta; D_m) ,$$
where $D_m$ runs over all $m$-basis type divisors. We also note here that the infimum can be achieved by some $m$-basis type divisor, that is,  $\inf=\min$ in this case.

By \cite{FO18, BJ20}, the limit $\lim_{m\to \infty}\delta_m(X,\Delta)$ indeed exists which is exactly the now known delta invariant, i.e. $\delta(X,\Delta)$. For a prime divisor $E$ over $X$, we define
$$S_m(E):=\sup_{D_m}\ord_E(D_m), $$
where $D_m$ runs over all $m$-basis type divisors, then $\delta_m(X,\Delta)=\frac{A_{X,\Delta}(E)}{S_m(E)}$. We also know that
$$S_{X,\Delta}(E):=\lim_{m\to \infty} S_m(E)=\frac{1}{\vol(-K_X-\Delta)}\int_0^\infty\vol(-\mu^*(K_X+\Delta)-tE)\dt .$$
Then we have the following result on the definition of delta invariants due to \cite{BJ20}.

\begin{theorem}
Let $(X,\Delta)$ be a log Fano pair, then
$$\delta(X,\Delta)=\inf_E\frac{A_{X,\Delta}(E)}{S_{X,\Delta}(E)}, $$
where $E$ runs over all prime divisors over $X$.
\end{theorem}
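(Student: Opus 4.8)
The plan is to combine three facts already recorded above: that $\delta(X,\Delta)=\lim_m\delta_m(X,\Delta)$ exists; that for every divisible $m$ one has $\delta_m(X,\Delta)=\inf_E A_{X,\Delta}(E)/S_m(E)$, the infimum taken over all prime divisors $E$ over $X$ (this is the pointwise formula $\lct(X,\Delta;D_m)=\inf_E A_{X,\Delta}(E)/\ord_E(D_m)$ together with exchanging the two infima, over $D_m$ and over $E$); and that $S_m(E)\to S_{X,\Delta}(E)$ for each fixed $E$. Set $n:=\dim X$ and write $R:=\bigoplus_{m\ge 0}H^0(X,-m(K_X+\Delta))$ and $N_m:=\dim R_m$.

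One inequality is immediate. Fix a prime divisor $E$ over $X$. For every $m$ we have $\delta_m(X,\Delta)\le A_{X,\Delta}(E)/S_m(E)$; letting $m\to\infty$ and using $\delta_m\to\delta$ together with $S_m(E)\to S_{X,\Delta}(E)$ gives $\delta(X,\Delta)\le A_{X,\Delta}(E)/S_{X,\Delta}(E)$. Taking the infimum over $E$ yields $\delta(X,\Delta)\le\inf_E A_{X,\Delta}(E)/S_{X,\Delta}(E)$.

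For the reverse inequality one must exchange $\lim_m$ with $\inf_E$, and for this I would prove a uniform comparison
$$S_m(E)\ \le\ (1+\varepsilon_m)\,S_{X,\Delta}(E)\qquad\text{for every prime divisor }E\text{ over }X,$$
with a sequence $\varepsilon_m\downarrow 0$ depending only on $(X,\Delta)$. Granting this, the pointwise formula gives $\delta_m(X,\Delta)=\inf_E A_{X,\Delta}(E)/S_m(E)\ge(1+\varepsilon_m)^{-1}\inf_E A_{X,\Delta}(E)/S_{X,\Delta}(E)$, and letting $m\to\infty$ produces $\delta(X,\Delta)\ge\inf_E A_{X,\Delta}(E)/S_{X,\Delta}(E)$, which combined with the previous paragraph finishes the proof.

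It remains to establish the uniform comparison, which is the heart of the matter (and is essentially \cite{BJ20}). Fix an admissible flag on $X$ and let $\mathbf{O}\subset\bR^n$ be the Okounkov body of $R$ with respect to it, so that $\vol(\mathbf{O})=\lim_m N_m/m^n$ up to the normalizing factor. Each prime divisor $E$ induces the $\bZ_{\ge 0}$-filtration $\mathcal F_E^\bullet R$ by $\ord_E$, with concave transform $G_E\colon\mathbf{O}\to\bR_{\ge 0}$ satisfying $S_{X,\Delta}(E)=\vol(\mathbf{O})^{-1}\int_{\mathbf{O}}G_E$, whereas $m\,N_m\,S_m(E)=\sum_{i=1}^{N_m}\ord_E(s_i)$ for a basis $\{s_i\}$ of $R_m$ adapted to $\mathcal F_E^\bullet$ is a Riemann sum of $G_E$ over the images of the lattice points of $m\,\mathbf{O}$. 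For a concave function on a body with nonempty interior, the difference between this normalized sum and the integral is controlled by the oscillation of $G_E$ over cells of size $1/m$; since $G_E$ is concave and dominated by an affine function of slope $O(T_{X,\Delta}(E))$, where $T_{X,\Delta}(E)$ is the pseudo-effective threshold of $E$, this oscillation is at most $C\,T_{X,\Delta}(E)/m$ for a constant $C=C(X,\Delta)$. Finally the elementary bound $T_{X,\Delta}(E)\le(n+1)\,S_{X,\Delta}(E)$, which follows from concavity of $t\mapsto\vol(-K_X-\Delta-tE)^{1/n}$ on the interval $[0,T_{X,\Delta}(E)]$, converts this into $S_m(E)\le(1+C'/m)\,S_{X,\Delta}(E)$ with $C'$ independent of $E$, as desired. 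The main obstacle, and the only point that is not formal manipulation of the recalled facts, is exactly this uniformity of the Okounkov-body comparison over all divisors $E$ at once.
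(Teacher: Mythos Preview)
The paper does not prove this theorem; it simply records it as a result of \cite{BJ20} and moves on. There is therefore no ``paper's own proof'' to compare against.

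That said, your sketch is exactly the Blum--Jonsson strategy from \cite{BJ20}: the inequality $\delta\le\inf_E A/S$ is immediate termwise, and the reverse inequality is reduced to a \emph{uniform} estimate $S_m(E)\le(1+\varepsilon_m)S_{X,\Delta}(E)$ with $\varepsilon_m\to 0$ independent of $E$, which you then obtain from the Okounkov-body description of $S_m$ and $S$ together with the bound $T_{X,\Delta}(E)\le(n+1)S_{X,\Delta}(E)$ (the latter inequality is in fact quoted later in the present paper, in the proof of Theorem~\ref{mainthm:1}). Your Okounkov-body step is sketched loosely --- in \cite{BJ20} the uniform comparison is proved via a more careful lattice-point count rather than a raw ``oscillation of a concave function'' bound --- but the architecture you describe is the correct one and matches the cited source.
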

Delta invariant is a K-stability threshold due to the following well-known theorem.

\begin{theorem}{\rm {(\cite{Fuj19, FO18,BJ20})}}
Let $(X,\Delta)$ be a log Fano pair, then 
\begin{enumerate}
\item $(X,\Delta)$ is K-semistable if and only if $\delta(X,\Delta)\geq 1$.
\item $(X,\Delta)$ is uniformly K-stable if and only if $\delta(X,\Delta)>1$.
\end{enumerate}
\end{theorem}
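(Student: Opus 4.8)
The plan is to deduce both statements from two ingredients: the identity $\delta(X,\Delta)=\inf_E A_{X,\Delta}(E)/S_{X,\Delta}(E)$ recorded just above, and the valuative criterion for K-stability of Fujita and Li.

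First I would state the valuative criterion in the form most convenient here. Write $\beta_{X,\Delta}(E):=A_{X,\Delta}(E)-S_{X,\Delta}(E)$. Then $(X,\Delta)$ is K-semistable if and only if $\beta_{X,\Delta}(E)\geq 0$ for every prime divisor $E$ over $X$, and $(X,\Delta)$ is uniformly K-stable if and only if there is a constant $\epsilon>0$ with $\beta_{X,\Delta}(E)\geq \epsilon\, S_{X,\Delta}(E)$ for every such $E$. To establish this one passes to the non-Archimedean formalism of \cite{BHJ17}: the Donaldson--Futaki invariant and the minimum norm $J^{\mathrm{NA}}$ of a test configuration of $(X,\Delta)$ are read off from the induced filtration on the section ring $R=\bigoplus_m H^0(X,-m(K_X+\Delta))$; by the special test configuration theory of \cite{LX14} a destabilizing test configuration may be replaced by a special one without increasing the normalized Donaldson--Futaki invariant; and for a special test configuration the central fibre induces a single divisorial valuation $\ord_E$ over $X$ with normalized Donaldson--Futaki invariant equal to $\beta_{X,\Delta}(E)$ up to a fixed positive factor and with $J^{\mathrm{NA}}$ comparable to $S_{X,\Delta}(E)$ up to a dimensional constant. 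Conversely, a divisorial valuation with finitely generated associated graded ring produces a test configuration via the Rees construction, and a general valuation is handled by approximation, which is exactly where the $m$-th invariants $\delta_m$ and the convergence $\delta_m\to\delta$ of \cite{FO18,BJ20} enter.

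With the criterion in hand both equivalences are formal manipulations of inequalities. For (1): $\delta(X,\Delta)\geq 1$ says $A_{X,\Delta}(E)\geq S_{X,\Delta}(E)$, i.e. $\beta_{X,\Delta}(E)\geq 0$, for every $E$ over $X$, which by the criterion is exactly K-semistability. For (2): $\delta(X,\Delta)>1$ is equivalent, taking $\epsilon=\delta(X,\Delta)-1>0$, to $A_{X,\Delta}(E)/S_{X,\Delta}(E)\geq 1+\epsilon$, i.e. $\beta_{X,\Delta}(E)\geq \epsilon\, S_{X,\Delta}(E)$, for every $E$ over $X$; conversely any such $\epsilon$ forces $\delta(X,\Delta)=\inf_E A_{X,\Delta}(E)/S_{X,\Delta}(E)\geq 1+\epsilon>1$. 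By the criterion this is uniform K-stability.

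The genuinely hard part is the valuative criterion recalled above, and inside it two points: the MMP-based reduction of an arbitrary destabilizing test configuration to a special one of no larger normalized Donaldson--Futaki invariant (the content of \cite{LX14}); and the reverse passage from valuations to test configurations, which for a non-finitely-generated valuation cannot be carried out directly and must be routed through the approximations $\delta_m$ and the semicontinuity results of \cite{FO18,BJ20}. Granting those inputs, the theorem is the short chain of equivalences displayed above.
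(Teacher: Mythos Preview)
The paper does not prove this theorem at all: it is stated as a known result with a citation to \cite{Fuj19, FO18, BJ20} and no argument is given. So there is nothing in the paper to compare your proposal against.

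That said, your sketch is a faithful outline of how the cited references establish the result: reduce to the valuative/beta criterion of Fujita--Li (via \cite{BHJ17} and the special-test-configuration reduction of \cite{LX14}), then observe that $\beta_{X,\Delta}(E)\geq 0$ for all $E$ is literally $\delta\geq 1$, and the uniform version with slope $\epsilon$ is $\delta\geq 1+\epsilon$. You also correctly flag that the nontrivial content sits entirely in the valuative criterion and in the convergence $\delta_m\to\delta$ from \cite{FO18,BJ20}; the final equivalences are indeed formal once those are granted.
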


\section{On plt complements}\label{section 3}
In this section, we prove Theorem \ref{mainthm:1}.

\begin{theorem}{\rm (= Theorem \ref{mainthm:1})}
Given a log Fano pair $(X,\Delta)$ with $\delta(X,\Delta)\leq 1$. Then there exists a sequence of prime divisors $\{E_i\}_{i\in \bN}$ over $X$ such that
\begin{enumerate}
\item $\delta(X,\Delta)=\lim_{i\to \infty}\frac{A_{X,\Delta}(E_i)}{S_{X,\Delta}(E_i)}$, 
\item  For each $E_i$ there is a plt complement $(X,\Delta_i^+)$ such that  $E_i$ is the lc place of $(X,\Delta_i^+)$.
\end{enumerate}
\end{theorem}

\begin{proof}

We first assume $\delta(X,\Delta)<1$, then the $m$-th delta invariant $\delta_m$ is smaller than one for each sufficiently divisible natural number $m$. For each such $m$, there exists a $m$-basis type divisor $D_m$ of $(X,\Delta;-K_X-\Delta)$ such that $(X,\Delta+\delta_mD_m)$ is strictly log canonical (i.e. lc but not klt). Take a log resolution of  the pair $f: Y\to (X,\Delta+\delta_mD_m)$, we write
$$K_Y+f_*^{-1}\Delta+\delta_m f_*^{-1}D_m=f^*(K_X+\Delta+\delta_mD_m)+\sum_j a_jF_j-\sum_k G_k ,$$
where 
\begin{enumerate}
\item $(Y, f_*^{-1}\Delta+ f_*^{-1}D_m+\sum_j F_j+\sum_k G_k)$ is simple normal crossing, 
\item $F_j,G_k$ are all exceptional divisors with $a_j>-1$,
\item $G_k$ are all exceptional lc places on $Y$.
\end{enumerate}
We first choose positive rational numbers $r_j, l_k$ such that $-(\sum_jr_j F_j+\sum_k l_k G_k)$ is $f$-ample. Then we choose a rational number $0<\epsilon_m\ll 1$ and write the following formula:
$$K_Y+f_*^{-1}\Delta+(1-\epsilon_m)\delta_m f_*^{-1}D_m=f^*(K_X+\Delta+(1-\epsilon_m)\delta_mD_m)+\sum_j a_j'F_j-\sum_k(1-b_k) G_k , $$
where $a_j'>a_j>-1$ and $0<b_k \ll1$. More precisely, we have 
$$a_j'-a_j=\ord_{F_j}(\epsilon_m \delta_mD_m) \quad and \quad b_k=\ord_{G_k}(\epsilon_m\delta_m D_m).$$
We next choose an ample $\bQ$-divisor $H\sim_\bQ -K_X-\Delta$ such that the pair
$$(X,\Delta+(1-\epsilon_m)\delta_mD_m+\{1-(1-\epsilon_m)\delta_m\}H)$$ 
is klt. One can always choose $l_k$ properly such that the set $\{ \frac{b_k}{l_k}\}_k$ has a unique minimal element, and we just assume $\min_k \{ \frac{b_k}{l_k}\}_k=\frac{b_1}{l_1}$ for convenience. Consider $f: Y\to (X,\Delta+(1-\epsilon_m)\delta_mD_m+\{1-(1-\epsilon_m)\delta_m\}H)$ and write

\begin{align*}
&K_Y+f_*^{-1}\Delta+(1-\epsilon_m)\delta_m f_*^{-1}D_m+\{1-(1-\epsilon_m)\delta_m\}f^*H-\frac{b_1}{l_1}(\sum_j r_jF_j+\sum_k l_k G_k)\\
=&f^*(K_X+\Delta+(1-\epsilon_m)\delta_mD_m+\{1-(1-\epsilon_m)\delta_m\}H)\\&+\sum_j a_j'F_j-\sum_k(1-b_k) G_k -\frac{b_1}{l_1}(\sum_j r_jF_j+\sum_k l_k G_k)\\
=& f^*(K_X+\Delta+(1-\epsilon_m)\delta_mD_m+\{1-(1-\epsilon_m)\delta_m\}H)\\&+\sum_j (a_j'-\frac{b_1r_j}{l_1})F_j-G_1-\sum_{k\geq 2}(1-b_k+\frac{b_1l_k}{l_1})G_k
\end{align*}
It is clear that $1-b_k+\frac{b_1l_k}{l_1}<1$ for $k\geq 2$ by our assumption that the set $\{ \frac{b_k}{l_k}\}_k$ has a unique minimal element $\frac{b_1}{l_1}$. We denote by
$$L:= \{1-(1-\epsilon_m)\delta_m\}f^*H-\frac{b_1}{l_1}(\sum_j r_jF_j+\sum_k l_k G_k) .$$
One can always choose $\epsilon_m$ sufficiently small to make $L$ ample and $a_j'-\frac{b_1r_j}{l_1}>-1$ (recall that $b_k=\ord_{G_k}(\epsilon_m\delta_mD_m)$, thus $\epsilon_m$ being sufficiently small makes sure $b_1$ being sufficiently small). We then have the following 

\begin{align*}
&K_Y+f_*^{-1}\Delta+(1-\epsilon_m)\delta_m f_*^{-1}D_m+L+\sum_j F_j+\sum_k G_k\\
=& f^*(K_X+\Delta+(1-\epsilon_m)\delta_mD_m+\{1-(1-\epsilon_m)\delta_m\}H)\\ +& \sum_j (1+a_j'-\frac{b_1r_j}{l_1})F_j+\sum_{k\geq 2} (b_k-\frac{b_1l_k}{l_1} )G_k,
\end{align*}
where $1+a_j'-\frac{b_1r_j}{l_1}>0$ for any $j$ and $b_k-\frac{b_1l_k}{l_1}>0$ for any $k\geq 2$. We find an ample $\bQ$-divisor $\tilde{L}\sim_\bQ L$ such that
\begin{enumerate}
\item $\Supp(\tilde{L})$ does not contain $G_1$,
\item the pair $(Y, f_*^{-1}\Delta+(1-\epsilon_m)\delta_m f_*^{-1}D_m+\tilde{L}+\sum_j F_j+\sum_k G_k)$ is dlt.
\end{enumerate}
 By \cite[Corollary 1.4.2]{BCHM10}, one can run MMP/$X$ for the pair $(Y, f_*^{-1}\Delta+(1-\epsilon_m)\delta_m f_*^{-1}D_m+\tilde{L}+\sum_j F_j+\sum_k G_k)$
to get a minimal model $\phi: Y\dashrightarrow Y'/X$ such that $\phi$ contracts all $\{ F_j\}_j$ and $\{ G_k\}_{k\geq 2}$, and $\phi$ only contracts these divisors. We denote by $\Delta', D_m', G_1', L'$ the push forward of $f_*^{-1}\Delta, f_*^{-1}D_m, G_1,\tilde{L}$ by $\phi$ respectively, then we get a morphism 
$$f': (Y', \Delta'+(1-\epsilon_m)\delta_mD_m'+L'+G_1') \to (X, \Delta+(1-\epsilon_m)\delta_mD_m+f'_*L').$$
It is clear that 
\begin{enumerate}
\item $K_X+\Delta+(1-\epsilon_m)\delta_mD_m+f'_*L'\sim_\bQ 0,$
\item $K_{Y'}+\Delta'+(1-\epsilon_m)\delta_mD_m'+L'+G_1'=f'^*(K_X+\Delta+(1-\epsilon_m)\delta_mD_m+f'_*L'), $
\item the pair $(Y', \Delta'+(1-\epsilon_m)\delta_mD_m'+L'+G_1')$ is plt.
\end{enumerate}
Denote by $\Delta_m^+:=\Delta+ (1-\epsilon_m)\delta_mD_m+f'_*L'$ and $E_m:=G_1'$, then we see $(X,\Delta_m^+)$ is a plt complement and $E_m$ is the lc place of $(X,\Delta_m^+)$. By our choice, we have $\delta_m=\frac{A_{X,\Delta}(E_m)}{\ord_{E_m}(D_m)}$. Thus by \cite[Theorem 4.1]{BLZ19}, we see that $\{E_m \}$ is a sequence of prime divisors over $X$ we need to approximate $\delta(X,\Delta)$.

Now we turn to the case $\delta(X,\Delta)=1$. By \cite[Theorem 3.1]{ZZ19}, for any rational number $0<\epsilon \ll 1$ one can find a $0\leq B\sim_\bQ -K_X-\Delta$ such that $\delta(X, \Delta+\epsilon B)<1$. We choose a decreasing sequence of rational numbers $0<\epsilon_i\ll1$ such that $\lim_{i\to \infty} \epsilon_i=0$. For each $\epsilon_i$ we choose a $0\leq B_i\sim_\bQ -K_X-\Delta$ such that $\delta(X,\Delta+\epsilon_iB_i)<1$. By the previous case, for each $\epsilon_i$, one can find a sequence of prime divisors $\{E_{i,m}\}_{m\in \bN}$ which can be achieved as lc places of some plt complements of $(X,\Delta+\epsilon_i B_i)$ (hence also plt complements of $(X,\Delta)$) such that

$$\delta(X, \Delta+\epsilon_iB_i)=\lim_{m\to \infty} \frac{A_{X,\Delta+\epsilon_iB_i}(E_{i,m})}{S_{X,\Delta+\epsilon_iB_i}(E_{i,m})}.$$

Note that

$$\frac{A_{X,\Delta+\epsilon_iB_i}(E_{i,m})}{S_{X,\Delta+\epsilon_iB_i}(E_{i,m})}= \frac{A_{X,\Delta}(E_{i,m})-\ord_{E_{i,m}}(\epsilon_iB_i)}{(1-\epsilon_i)S_{X,\Delta}(E_{i,m})}
\geq \frac{A_{X,\Delta}(E_{i,m})-\epsilon_i T_{X,\Delta}(E_{i,m})}{(1-\epsilon_i)S_{X,\Delta}(E_{i,m})},$$
where 
$$T_{X,\Delta}(E_{i,m}):=\sup_{D\in |-K_X-\Delta|_\bQ} \ord_{E_{i,m}}(D).$$
That is,
$$ \frac{A_{X,\Delta}(E_{i,m})}{S_{X,\Delta}(E_{i,m})}\leq (1-\epsilon_i)\cdot\frac{A_{X,\Delta+\epsilon_iB_i}(E_{i,m})}{S_{X,\Delta+\epsilon_iB_i}(E_{i,m})}+\epsilon_i\cdot \frac{T_{X,\Delta}(E_{i,m})}{S_{X,\Delta}(E_{i,m})}.$$
As $\frac{T_{X,\Delta}(E_{i,m})}{S_{X,\Delta}(E_{i,m})}\leq n+1$ (see \cite[Section 3]{BJ20}), where $n$ is the dimension of $X$, thus 
$$\inf_{i\to \infty}\inf_{m\to\infty} \frac{A_{X,\Delta}(E_{i,m})}{S_{X,\Delta}(E_{i,m})}\leq 1 .$$
So one can find a subsequence of $\{E_{i,m}\}_{i,m\in \bN}$ to approximate $\delta(X,\Delta)$. The proof is finished.

\end{proof}

\section{On optimal destabilization conjecture}\label{section 4}

For a log Fano pair $(X,\Delta)$ with $\delta(X,\Delta)\leq 1$, the existence of divisorial valuation computing $\delta(X,\Delta)$ is one of the core problems in algebraic K-stability theory, e.g. \cite[Conjecture 1.5]{BX19} or \cite[Conjecture 1.2]{BLZ19}. In this section we will prove Theorem \ref{mainthm:2} which is a tentative attempt to this problem.

\begin{theorem}{\rm{(=Theorem \ref{mainthm:2})}}
Given a log Fano pair $(X,\Delta)$ with $\delta(X,\Delta)\leq 1$. Assume Conjecture \ref{bounded plt} is true, then there exists a prime divisor $E$ over $X$ such that $\delta(X,\Delta)=\frac{A_{X,\Delta}(E)}{S_{X,\Delta}(E)}$.
\end{theorem}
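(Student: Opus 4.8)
The plan is to run the quasi-monomial-minimizer argument of \cite{BLX19,Xu20}, taking advantage of the fact that the dual complexes attached to complements of plt type are only one-dimensional, so that the minimizer produced is automatically divisorial. First I would invoke Conjecture \ref{bounded plt} to obtain a fixed natural number $N$ and a sequence of prime divisors $\{E_i\}$ over $X$ with $\delta(X,\Delta)=\lim_i \frac{A_{X,\Delta}(E_i)}{S_{X,\Delta}(E_i)}$, each $E_i$ an lc place of an $N$-complement of plt type $(X,\Delta_i^+)$. Enlarging $N$ to a sufficiently divisible multiple, we may assume $-N(K_X+\Delta)$ is Cartier and every $\Delta_i^+$ lies in the linear system $\frac1N|-N(K_X+\Delta)|$; since being an $N$-complement of plt type is a constructible condition, there is a finite-type scheme $T$ with a universal divisor $\mD\subset X\times T$ such that the fibers $(X,\mD_t)$ run exactly over the $N$-complements of plt type of $(X,\Delta)$. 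Each $\Delta_i^+$ corresponds to a point $t_i\in T$, and $E_i$ is an lc place of the fiber over $t_i$.

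Next I would stratify. By Noetherian induction there is a finite decomposition $T=\bigsqcup_s T_s$ into smooth locally closed strata such that, after an \'etale base change over each $T_s$, there is a projective morphism $\mY_s\to X\times T_s$ that is a fiberwise log resolution of $(X,\mD_t)$ for every $t\in T_s$, and such that the lc places of the fibers form a constant family: finitely many horizontal divisors $\mE_{s,1},\dots,\mE_{s,r_s}$ on $\mY_s$ and a fixed index set $J_s$ with the property that, for every $t\in T_s$, the lc places of $(X,\mD_t)$ are exactly $\{\mE_{s,j}|_t: j\in J_s\}$. Here is the only place the plt hypothesis enters: since $(X,\mD_t)$ is of plt type, the divisors $\{\mE_{s,j}|_t:j\in J_s\}$ are pairwise disjoint on every fiber $\mY_{s,t}$, hence also on $\mY_s$; therefore the dual complex of lc places carries no higher simplices, its cone of quasi-monomial valuations is a disjoint union of $|J_s|$ rays (this is the one-dimensionality), and every quasi-monomial valuation supported on it has the form $c\cdot\ord_{\mE_{s,j}|_t}$ and is in particular \emph{divisorial}. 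This is exactly the feature absent in the general case, where the dual complexes can be higher-dimensional and a quasi-monomial minimizer need not be divisorial.

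Finally I would apply the semicontinuity machinery. On each stratum the log discrepancy $A_{X,\Delta}(\ord_{\mE_{s,j}|_t})$ is constant in $t$ (after refining the stratification if necessary, using that discrepancies are preserved under restriction to fibers of a fiberwise log resolution), while $t\mapsto S_{X,\Delta}(\ord_{\mE_{s,j}|_t})$ is lower semicontinuous by lower semicontinuity of volumes in families; hence $h_{s,j}(t):=A_{X,\Delta}(\ord_{\mE_{s,j}|_t})/S_{X,\Delta}(\ord_{\mE_{s,j}|_t})$ is lower semicontinuous on $T_s$. Extending each family of divisors across the projective closure $\overline{T_s}$ after a further resolution of the total space and invoking semicontinuity once more — the properness step of \cite{BLX19,Xu20} — the infimum $\inf_{t\in T_s}h_{s,j}(t)$ is attained; as there are only finitely many pairs $(s,j)$, the number $\delta':=\min_{s,\,j\in J_s}\min_{t}h_{s,j}(t)$ is a genuine minimum, realized by some prime divisor $E$ over $X$. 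Since each $\ord_{E_i}$ occurs among the $\ord_{\mE_{s,j}|_t}$ we get $\delta'\le\lim_i\frac{A_{X,\Delta}(E_i)}{S_{X,\Delta}(E_i)}=\delta(X,\Delta)$, while $\delta'\ge\delta(X,\Delta)$ because every $h_{s,j}(t)$ is the value of $A_{X,\Delta}/S_{X,\Delta}$ on an honest prime divisor over $X$ and $\delta(X,\Delta)=\inf_E A_{X,\Delta}(E)/S_{X,\Delta}(E)$. Therefore $\delta(X,\Delta)=\delta'=A_{X,\Delta}(E)/S_{X,\Delta}(E)$, as desired.

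The main obstacle is the properness/semicontinuity package of the last step: establishing rigorously that $A_{X,\Delta}$ is (constructibly) constant along the strata, that $S_{X,\Delta}$ varies lower-semicontinuously, and above all that the non-compactness of the $T_s$ does not obstruct the infimum from being attained — this forces one to extend the families of valuations across $\overline{T_s}$ and to control the invariants there, which is the technical heart of \cite{BLX19,Xu20}. The novelty is solely that the plt-type restriction collapses the dual complexes to one dimension, so that the minimizer these arguments produce is divisorial from the outset and no finite-generation of the associated graded ring is needed.
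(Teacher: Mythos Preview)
Your overall strategy---parametrize the bounded complements, stratify so as to obtain fiberwise log resolutions, and use that the plt-type hypothesis collapses the dual complexes to a disjoint union of rays so that any minimizer is automatically divisorial---is exactly the paper's. The difference lies in how the dependence on the base point is handled. You attempt to control the variation of $h_{s,j}(t)=A_{X,\Delta}(\ord_{\mE_{s,j}|_t})/S_{X,\Delta}(\ord_{\mE_{s,j}|_t})$ along each stratum via semicontinuity and then compactify the strata to force the infimum over $t$ to be attained, and you rightly flag this ``properness/semicontinuity package'' as the hard step. The paper avoids it entirely: on each stratum $\mZ_i$ it sets
\[
a_i:=\inf_{v\in\mD(\mE_{i,t_0})}\frac{A_{X,\Delta}(v)}{S_{X,\Delta}(v)}
\]
for an arbitrary $t_0\in\mZ_i$ and invokes \cite[Proposition~4.2]{BLX19}, which asserts that $a_i$ is \emph{independent} of the choice of $t_0$. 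So there is nothing to vary---each stratum contributes a single number, no compactification of the base is needed, and $\delta(X,\Delta)=\min_{i\in I'}a_i$ where $I'$ indexes the strata with one-dimensional dual complex. Since $A/S$ is constant along each ray, every such $a_i$ is the minimum of finitely many values $A_{X,\Delta}(E)/S_{X,\Delta}(E)$ and is therefore attained by a prime divisor. In short, the obstacle you identify dissolves once you use the constancy statement from \cite{BLX19} rather than a semicontinuity statement.
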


\begin{proof}

We assume that there exists a sufficiently divisible natural number $N$ such that $\delta(X,\Delta)$ can be approximated by using lc places of plt $N$-complements. Denote by $\bP^M:=\frac{1}{N}|-N(K_X+\Delta)|$ where $M+1$ is the dimension of the linear system $|-N(K_X+\Delta)|$, and let $\mH\subset X\times \bP^M$ be the universal divisor with respect to $\frac{1}{N}|-N(K_X+\Delta)|$. We consider the family 
$$\pi: (X\times \bP^M, \Delta\times \bP^M+\mH)\to \bP^M .$$
By ACC of log canonical thresholds (e.g. \cite{HMX14}) and lower semi-continuity of log canonical thresholds, there exists a locally closed subset $\mZ\subset \bP^M$ such that the corresponding restricted family $\pi_\mZ: (X\times \mZ, \Delta\times \mZ+\mH_\mZ)\to \mZ$ satisfies the following properties:
$$\mZ=\{t\in \bP^M|\textit{the fiber $(X, \Delta+\mH_t)$ is strictly log canonical, i.e. lc but not klt}\}. $$
By a similar way as the proof of \cite[Proposition 4.3 and Theorem 4.5]{BLX19}, there exists a finite decomposition $\mZ=\bigcup_{i\in I}\mZ_i$ so that each $\mZ_i$ is smooth and the restricted family $\pi_{\mZ_i}: (X\times \mZ_i, \Delta\times \mZ_i+\mH_{\mZ_i})\to \mZ_i$ admits a fiberwise log resolution up to an \'etale morphism. That is to say, for each $i$ there exists an \'etale morphism $\mZ_i'\to \mZ_i$ such that
the restricted family (via a base change) $\pi_{\mZ_i'}: (X\times \mZ_i', \Delta\times \mZ_i'+\mH_{\mZ_i'})\to \mZ_i'$ admits a fiberwise log resolution. As it will not cause any confusion we still use $\mZ_i$ to replace $\mZ_i'$ for convenience. We take a fiberwise log resolution of $(X\times \mZ_i, \Delta\times \mZ_i+\mH_{\mZ_i})$ for each $i$, denoted by
$$f_i: \mY_i\to (X\times \mZ_i, \Delta\times \mZ_i+\mH_{\mZ_i}). $$
Then for each closed point $t\in \mZ_i$, the fiber $f_{i,t}: \mY_{i,t}\to (X, \Delta+\mH_t)$ is a log resolution. As the pair $(X\times \mZ_i, \Delta\times \mZ_i+\mH_{\mZ_i})$ is strictly log canonical, we use $\mE_i$ to denote the reduced sum of all lc places on $\mY_i$. Then one sees that $\mE_i$ generates a dual complex of lc places over $(X\times \mZ_i,\Delta\times \mZ_i+\mH_{\mZ_i})$, denoted as $\mD(\mE_i)$. Due to the fiberwise log resolution, it is also clear that the restriction of $\mD(\mE_i)$ to a fiber over a closed point $t\in \mZ_i$, denoted by $\mD(\mE_{i,t})$, is exactly the dual complex of lc places over $(X,\Delta+\mH_t)$. For a fixed closed point $t_0\in \mZ_i$, we define 
$$a_i:=\inf_{v\in \mD(\mE_{i,t_0})}\frac{A_{X,\Delta}(v)}{S_{X,\Delta}(v)},$$
where $v$ runs through all points in $\mD(\mE_{i,t_0})$ which correspond to quasi-monomial valuations. By \cite[Proposition 4.2]{BLX19}, the number $a_i$ does not depend on the choice of $t_0\in \mZ_i$. By our assumption, 
$$\delta(X,\Delta)=\min_{i\in I'} \{a_i\}_i,$$
where $I'\subset I$ consists of all subscripts $i$ such that $\mD(\mE_i)$ is zero-dimensional. Note that for each $i\in I'$, the number $a_i$ is achieved by a divisorial valuation over $(X, \Delta+\mH_{t_0})$, since the dual complex $\mD(\mE_i)$ (resp. $\mD(\mE_{i,t_0})$) is zero-dimensional. Thus $\delta(X,\Delta)$ is achieved by a divisorial valuation as the index set $I'$ is finite. The proof is finished.

\end{proof}

\section{Example}

In this section, we give a counter-example to explain that  lc places of complements of plt type  are not necessarily  related to special divisors.

\begin{example}
Let $X:=\bP^2$ and $\Delta^+:=C$, where $C\subset \bP^2$ is a smooth cubic curve. Then clearly $(X,\Delta^+)$ is a complement of plt type. However, $C$ is not a special divisor. We also point out here that $C$ is only a weakly special divisor.
\end{example}

We give some details to explain this example. Denote by $R:=\oplus_{m\in \bN} R_m$, where $R_m:=H^0(X,-mK_X)$. Then the divisorial valuation $\ord_C$ naturally induces a filtration $\mF_{\ord_C}$ on the graded ring $R$ as follows:
$$\mF_{\ord_C}^j R_m:=H^0(X, -mK_X-jC)=H^0(\bP^2, \mO_{\bP^2}(3(m-j))). $$
Write $\mX:=\Proj \oplus_{m\in \bN} \oplus_{j\in \bN}t^{-j}\mF^j_{\ord_C}R_m$. By \cite{BHJ17} we know that $\mX\to \bA^1$ is a test configuration induced by $\ord_C$. To show it is not a special test configuration, it suffices to confirm that the central fiber is not klt. The central fiber can be formulated as follows:
$$\mX_0=\Proj \oplus_{m\in \bN}\oplus_{j\in \bN} t^{-j}\mF^j_{\ord_C}R_m/\mF^{j+1}_{\ord_C}R_m\cong \Proj \oplus_{m\in \bN}\oplus_{j\in \bN}t^{-j}H^0(C, \mO_C(3(m-j))).$$
This is a cone over the cubic curve $C$, which is strictly log canonical.

\begin{remark}
This example is provided by Yuchen Liu when the author was considering Question \ref{special divisor} to characterize special divisors in the sense of plt complements.
\end{remark}

\bibliography{reference.bib}
\end{document}